\theoremstyle{plain}
\newtheorem{thm}{}[section]
\theoremstyle{definition}
\newtheorem{example}[thm]{}\theoremstyle{plain}
\newtheorem{prop}[thm]{}\theoremstyle{plain}
\newtheorem{lem}[thm]{}\theoremstyle{remark}
\newtheorem{rem}[thm]{}\theoremstyle{definition}
\newtheorem{defn}[thm]{}
\numberwithin{equation}{section}
\begin{document}
\title{Random Time Change and Related Evolution Equations\\
{\large{}Time Asymptotic Behavior}}
\author{\textbf{Anatoly N. Kochubei}\\
 Institute of Mathematics,\\
 National Academy of Sciences of Ukraine, \\
 Tereshchenkivska 3, \\
 Kyiv, 01004 Ukraine\\
 Email: kochubei@imath.kiev.ua\and\textbf{Yuri Kondratiev}\\
 Department of Mathematics, University of Bielefeld, \\
 D-33615 Bielefeld, Germany,\\
 Dragomanov University, Kiev, Ukraine\\
 Email: kondrat@math.uni-bielefeld.de\and\textbf{Jos{\'e} Lu{\'i}s
da Silva},\\
 CIMA, University of Madeira, Campus da Penteada,\\
 9020-105 Funchal, Portugal.\\
 Email: joses@staff.uma.pt}
\date{\today}
\maketitle
\begin{abstract}
In this paper we investigate the long time behavior of solutions to
fractional in time evolution equations which appear as results of
random time changes in Markov processes. We consider inverse subordinators
as random times and use the subordination principle for the solutions
to forward Kolmogorov equations. The class of subordinators for which
asymptotic analysis may be realized is described.
\end{abstract}

\section{Introduction}

We start with a brief description of our framework. Our presentation
will be rather informal. For necessary technical conditions and details
we refer to the main body of this paper.

Let $\{X_{t},t\geq0;P_{x},x\in E\}$ be a strong Markov process in
a phase space $E$. Denote $T_{t}$ its transition semigroup (in a
proper Banach space) and $A$ the generator of this semigroup. Let
$S_{t},t\geq0$ be a subordinator (i.e., a non-decreasing real-valued
L{\'e}vy process) with $S_{0}=0$ and Laplace exponent $\Phi$: 
\[
\mathbb{E}[e^{-\lambda S_{t}}]=e^{-t\Phi(\lambda)}\;\;t,\lambda>0.
\]
We assume that $S_{t}$ is independent of $X_{t}$.

Denote by $E_{t},t>0$ the inverse subordinator and introduce the
time changed process $Y_{t}=X_{E_{t}}$. We are interested in the
time evolution 
\[
u(x,t)=\mathbb{E}^{x}[f(Y_{t})]
\]
for a given initial data $f$. As it was pointed out in several works,
see e.g. \cite{Toaldo2015}, \cite{Chen2017}, $u(x,t)$ is the unique
strong solution (in some proper sense) to the following Cauchy problem
\[
\mathbb{D}_{t}^{(k)}u(x,t)=Au(x,t)\;\;u(x,0)=f(x).
\]
Here we have a generalized fractional derivative (see \cite{Kochubei11})
\[
\mathbb{D}_{t}^{(k)}\phi(t)=\frac{d}{dt}\int_{0}^{t}k(t-s)(\phi(s)-\phi(0))ds
\]
with a kernel $k$ uniquely defined by $\Phi$.

Let $u_{0}(x,t)$ be the solution to a similar Cauchy problem but
with ordinary time derivative. In stochastic terminology, it is the
solution to the forward Kolmogorov equation corresponding to the process
$X_{t}$. Under quite general assumptions there is a nice and essentially
obvious relation between these evolutions: 
\[
u(x,t)=\int_{0}^{\infty}u_{0}(x,\tau)G_{t}(\tau)d\tau,
\]
where $G_{t}(\tau)$ is the density of $E_{t}$. Of course, we may
have similar relations for fundamental solutions to the considered
equations, for the backward Kolmogorov equations or time evolutions
of other related quantities.

Having in mind the analysis of the influence of the random time change
on the asymptotic properties of $u(x,t)$, we may hope that the latter
formula gives all necessary technical equipments. Unfortunately, the
situation is essentially more complicated. In fact, the knowledge
about the density $G_{t}(\tau)$ is, in general, very poor. There
are two particular cases in which the asymptotic analysis was already
realized. First of all, it is the situation of the so-called stable
subordinators. Starting with the pioneering works by Meerschaert and
his collaborators, this case was studied in details \cite{BM01,MS2004}.

Another case is related to a scaling property assumed for $\Phi$
\cite{Chen2018}. It is, nevertheless, difficult to give an interpretation
of this scaling assumption in terms of the subordinator.

The aim of this paper is to describe a class of subordinators for
which we may obtain information about the time asymptotic of the generalized
fractional dynamics. We propose two methods for the study of this
problem. In the first approach we use a modified version of the ratio
Tauberian theorem from \cite{Li2007}. This method works under general
assumptions about the integrability in time of the solution $u_{0}(x,t)$.
Actually, under this assumption the asymptotic is determined completely
by the subordinator characteristics.

There is another side of the problem. In many interesting cases the
integrability assumption is not valid. Or, vice versa, we have more
detailed information about the behavior of $u_{0}(x,t)$ which is
much stronger than integrability (e.g., exponential decay). We propose
an alternative approach to such situations based on the Laplace transform
techniques. It gives us the possibility to study solutions without
the integrability property and to see the effects of a stronger decay
of $u_{0}(x,t)$.

Finally, we apply our methods to the study of fractional dynamics
in several particular models: the heat equation, non-local diffusion,
solutions with exponential decays. There we see that the general method
is working perfectly in space dimensions $d\geq3$. But for physically
important dimensions $d=1,2$ we need our alternative approach.

\section{General Fractional Derivative}

\label{sec:GFD}

\subsection{Definitions and Assumptions}

In this section we recall the concept of general fractional derivative
(GFD) associated to a kernel $k$, see \cite{Kochubei11} and references
therein. The basic ingredient of the theory of evolution equations,
\cite{KST2006,Eidelman2004} is to consider, instead of the first
time derivative, the Caputo-Djrbashian fractional derivative of order
$\alpha\in(0,1)$ 
\begin{equation}
\big(\mathbb{D}_{t}^{(\alpha)}u\big)(t)=\frac{d}{dt}\int_{0}^{t}k(t-s)\big(u(s)-u(0)\big)\,ds,\quad t>0,\label{eq:Caputo-derivative}
\end{equation}
where 
\begin{equation}
k(t)=\frac{t^{-\alpha}}{\Gamma(1-\alpha)},\;t>0.\label{eq:kalpha}
\end{equation}
More generally, it is natural to consider differential-convolution
operators 
\begin{equation}
\big(\mathbb{D}_{t}^{(k)}u\big)(t)=\frac{d}{dt}\int_{0}^{t}k(t-s)\big(u(s)-u(0)\big)\,ds,\;t>0,\label{eq:general-derivative}
\end{equation}
where $k\in L_{\mathrm{loc}}^{1}(\mathbb{R}_{+})$ ($\mathbb{R}_{+}:=[0,\infty)$)
is a non-negative kernel. As an example of such an operator, we consider
the distributed order derivative $\mathbb{D}_{t}^{(\mu)}$ corresponding
to 
\begin{equation}
k(t)=\int_{0}^{1}\frac{t^{-\alpha}}{\Gamma(1-\alpha)}\mu(\alpha)\,d\alpha,\quad t>0,\label{eq:distributed-kernel}
\end{equation}
where $\mu(\alpha)$, $0\le\alpha\le1$ is a positive weight function
on $[0,1]$, see \cite{Atanackovic2009,Daftardar-Gejji2008,Hanyga2007,Kochubei2008,Kochubei2008a,Gorenflo2005,Meerschaert2006}.

The class of suitable kernels $k$ we are interested in is such that
the fundamental solution of the corresponding evolution equation \eqref{eq:Evol-eq-1}
in Section~\ref{sec:GFEE}, are probability densities in $L^{\infty}(\mathbb{R}_{+})\cap L^{1}(\mathbb{R}_{+})$.
Therefore, in this paper we make the following assumptions on the
Laplace transform $\mathcal{K}$ of the kernel $k\in L_{\mathrm{loc}}^{1}(\mathbb{R}_{+})$. 
\begin{description}
\item [{(H)}] Let $k\in L_{\mathrm{loc}}^{1}(\mathbb{R}_{+})$ be a non-negative
kernel such that $\int_{0}^{\infty}k(s)\,ds>0$ and its Laplace transform
\begin{equation}
\mathcal{K}(\lambda):=(\mathscr{L}k)(\lambda):=\int_{0}^{\infty}e^{-\lambda t}k(t)\,dt\label{eq:Laplace-k}
\end{equation}
exists for all $\lambda>0$ and $\mathcal{K}$ belongs to the Stieltjes
class (or equivalently, the function $\mathcal{L}(\lambda):=\lambda\mathcal{K}(\lambda)$
belongs to the complete Bernstein function class (see \cite{Schilling12}
for definitions), and 
\begin{equation}
\mathcal{K}(\lambda)\to\infty,\text{ as \ensuremath{\lambda\to0}};\quad\mathcal{K}(\lambda)\to0,\text{ as \ensuremath{\lambda\to\infty}};\label{eq:H1}
\end{equation}
\begin{equation}
\mathcal{L}(\lambda)\to0,\text{ as \ensuremath{\lambda\to0}};\quad\mathcal{L}(\lambda)\to\infty,\text{ as \ensuremath{\lambda\to\infty}}.\label{eq:H2}
\end{equation}
\end{description}
Under the hypotheses (H), $\mathcal{L}(\lambda)$ and its analytic
continuation admit an integral representation (cf.\ Thm.~6.2 in
\cite{Schilling12}), namely 
\begin{equation}
\mathcal{L}(\lambda)=\int_{(0,\infty)}\frac{\lambda}{\lambda+t}\,d\sigma(t),\label{eq:lambdaKlambda}
\end{equation}
where $\sigma$ is a Borel measure on $[0,\infty)$, such that $\int_{(0,\infty)}(1+t)^{-1}\,d\sigma(t)<\infty$.

Here we give some concrete examples of kernels $k$ and show that
its Laplace transform $\mathcal{K}$ satisfies \eqref{eq:H1} and
\eqref{eq:H2} above. \begin{example}[$\alpha$-Stable subordinator]
\label{exa:alpha-stable1}Let $k$ be the kernel \eqref{eq:kalpha}
corresponding to the Caputo-Djrbashian fractional derivative $\mathbb{D}_{t}^{(\alpha)}$
of order $\alpha\in(0,1)$. Then its Laplace transform is given by
\[
\mathcal{K}(\lambda)=\frac{1}{\Gamma(1-\alpha)}\int_{0}^{\infty}e^{-\lambda t}t^{-\alpha}\,dt=\lambda^{\alpha-1}.
\]
It is easy to verify that \eqref{eq:H1} and \eqref{eq:H2} are satisfied
for $\mathcal{K}$ and $\mathcal{L}$. \end{example}

\begin{example}[Gamma subordinator] \label{exa:gamma-subordinator}Let
$k$ be the kernel defined by 
\[
\mathbb{R}_{+}\ni t\mapsto k(t):=a\Gamma(0,bt),\quad a,b>0,
\]
where $\Gamma(\nu,x):=\int_{x}^{\infty}t^{\nu-1}e^{-t}\,dt$ is the
upper incomplete Gamma function. The Laplace transform of $k$ is
given by 
\[
\mathcal{K}(\lambda)=\frac{a}{\lambda}\log\left(1+\frac{\lambda}{b}\right),\quad\lambda>0.
\]
Again, the properties \eqref{eq:H1} and \eqref{eq:H2} are simple
to verify. \end{example}

\begin{example}[Inverse Gaussian subordinator] Let $a\ge0$ and
$b>0$ be given and define the kernel $k$ by 
\[
\mathbb{R}_{+}\ni t\mapsto k(t):=\sqrt{\frac{b}{2\pi}}\left(\frac{2}{\sqrt{t}}e^{-\frac{at}{2}}-\sqrt{2a\pi}(1-\text{erf}(z))\right),\quad z:=\sqrt{\frac{at}{2}},
\]
where $\text{erf}(z):=\frac{2}{\sqrt{\pi}}\int_{0}^{z}e^{-t^{2}}\,dt$
is the error function. The Laplace transform of $k$ can be computed
and is given by 
\[
\mathcal{K}(\lambda)=\frac{\sqrt{b}}{\lambda}\big(2\sqrt{2\lambda+a}-\sqrt{a}\big),\quad\lambda>0.
\]
The properties \eqref{eq:H1} and \eqref{eq:H2} follows easily. \end{example}

\subsection{Special Classes of Kernels}

Here we collect some classes of kernels $k$ and its Laplace transform
asymptotics since they play a major role in this work. Two classes
are emphasized, the class corresponding to the distributed order derivative
with $k$ given by \eqref{eq:distributed-kernel} and the class of
the general fractional derivative \eqref{eq:general-derivative} for
which $\mathcal{K}$ is a Stieltjes function.

\subsubsection{Distributed order derivatives}

The following proposition refers to the special case of distributed
order derivative, see \cite{Kochubei2008} for the proof. We denote
the negative real axis by $\mathbb{R}_{-}:=(-\infty,0]$. \begin{prop}[{cf.\ \cite[Prop.~2.2]{Kochubei2008}}]
\label{prop:distr-order-prop-K} 
\begin{enumerate}
\item Let $\mu\in C^{2}([0,1])$ be given. If $\lambda\in\mathbb{C}\backslash\mathbb{R}_{-}$
with $|\lambda|\to\infty$, then 
\begin{equation}
\mathcal{K}(\lambda)=\frac{\mu(1)}{\log\lambda}+O\left((\log|\lambda|)^{-2}\right).\label{infty}
\end{equation}
More precisely, if $\mu\in C^{3}([0,1])$, then 
\[
\mathcal{K}(\lambda)=\frac{\mu(1)}{\log\lambda}-\frac{\mu'(1)}{(\log\lambda)^{2}}+O\left((\log|\lambda|)^{-3}\right).
\]
\item Let $\mu\in C([0,1])$ and $\mu(0)\ne0$ be given. If $\lambda\in\mathbb{C}\setminus\mathbb{R}_{-}$,
then 
\begin{equation}
\mathcal{K}(\lambda)\sim\frac{1}{\lambda}\log\left(\frac{1}{\lambda}\right)^{-1}\mu(0),\quad\mathrm{as}\;\lambda\to0.\label{zero}
\end{equation}
\item Let $\mu\in C([0,1])$ be such that $\mu(\alpha)\sim a\alpha^{s}$,
$a>0$, $s>0$. If $\lambda\in\mathbb{C}\setminus\mathbb{R}_{-}$,
then 
\begin{equation}
\mathcal{K}(\lambda)\sim a\Gamma(1+s)\frac{1}{\lambda}\log\left(\frac{1}{\lambda}\right)^{-1-s},\quad\mathrm{as}\;\lambda\to0.\label{eq:zero1}
\end{equation}
\end{enumerate}
\end{prop}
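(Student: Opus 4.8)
The statement to prove is Proposition~\ref{prop:distr-order-prop-K}, which gives the asymptotic behavior of $\mathcal{K}(\lambda)$ (the Laplace transform of the distributed-order kernel $k(t)=\int_0^1 \frac{t^{-\alpha}}{\Gamma(1-\alpha)}\mu(\alpha)\,d\alpha$) both as $|\lambda|\to\infty$ and as $\lambda\to0$.

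\medskip

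The plan is to start from the explicit representation of the Laplace transform. Since $\mathscr{L}\!\left(\frac{t^{-\alpha}}{\Gamma(1-\alpha)}\right)(\lambda)=\lambda^{\alpha-1}$ for $\alpha\in(0,1)$ (as recalled in Example~\ref{exa:alpha-stable1}), Fubini's theorem gives
\[
\mathcal{K}(\lambda)=\int_0^1 \lambda^{\alpha-1}\mu(\alpha)\,d\alpha=\frac{1}{\lambda}\int_0^1 e^{\alpha\log\lambda}\mu(\alpha)\,d\alpha,
\]
valid first for $\lambda>0$ and then, by analytic continuation, for $\lambda\in\mathbb{C}\setminus\mathbb{R}_-$ with $\log\lambda$ the principal branch. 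So everything reduces to the asymptotics of the integral $I(z):=\int_0^1 e^{\alpha z}\mu(\alpha)\,d\alpha$ with $z=\log\lambda$, in the two regimes $\mathrm{Re}\,z\to+\infty$ (i.e. $|\lambda|\to\infty$) and $z\to-\infty$ in an appropriate sense (i.e. $\lambda\to0$).

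\medskip

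For part (1), $|\lambda|\to\infty$ means $\mathrm{Re}(\log\lambda)\to+\infty$, so the mass of $e^{\alpha\log\lambda}$ concentrates near the endpoint $\alpha=1$. The standard device is integration by parts (a Watson-type lemma at the endpoint): writing $\int_0^1 e^{\alpha z}\mu(\alpha)\,d\alpha=\frac{1}{z}\left[e^{\alpha z}\mu(\alpha)\right]_0^1-\frac{1}{z}\int_0^1 e^{\alpha z}\mu'(\alpha)\,d\alpha$, the boundary term at $\alpha=1$ produces $\frac{1}{z}e^{z}\mu(1)$, i.e. $\frac{\lambda}{\log\lambda}\mu(1)$; dividing by $\lambda$ gives the leading term $\mu(1)/\log\lambda$. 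Iterating the integration by parts once more (using $\mu\in C^2$, resp. $C^3$) peels off the next term $-\mu'(1)/(\log\lambda)^2$ and leaves a remainder controlled by $\int_0^1 |e^{\alpha z}|\,d\alpha \lesssim e^{\mathrm{Re}\,z}/|\mathrm{Re}\,z| \le e^{\mathrm{Re}\,z}/c|z|$ times $|z|^{-2}$ (resp. $|z|^{-3}$); here one uses that on $\mathbb{C}\setminus\mathbb{R}_-$ with $|\lambda|$ large, $\mathrm{Re}(\log\lambda)=\log|\lambda|$ dominates and $|\log\lambda|\le \log|\lambda|+\pi$, so $|z|$ and $\mathrm{Re}\,z$ are comparable. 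Dividing through by $\lambda$ and noting $|e^z/\lambda|=1$ yields \eqref{infty} and its refinement.

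\medskip

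For parts (2) and (3), $\lambda\to0$ means $\mathrm{Re}(\log\lambda)\to-\infty$, so now the mass concentrates at the endpoint $\alpha=0$. With $z=\log(1/\lambda)\to+\infty$ (real part), write $\mathcal{K}(\lambda)=\frac1\lambda\int_0^1 e^{-\alpha z}\mu(\alpha)\,d\alpha$ and apply Watson's lemma at $\alpha=0$: if $\mu(\alpha)\sim a\alpha^s$ as $\alpha\to0^+$ then $\int_0^1 e^{-\alpha z}\mu(\alpha)\,d\alpha\sim a\,\Gamma(1+s)\,z^{-1-s}$, which after division by $\lambda$ gives \eqref{eq:zero1}; the case $\mu(0)\ne0$ is $s=0$, $a=\mu(0)$, giving \eqref{zero} since $\Gamma(1)=1$. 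The only subtlety versus the textbook Watson lemma is that $z=\log(1/\lambda)$ is complex when $\lambda\notin\mathbb{R}_+$; one needs $\mathrm{Re}\,z\to+\infty$ with $|\arg z|$ bounded, which holds because $z=\log(1/|\lambda|)-i\arg\lambda$ and $\arg\lambda$ is bounded while $\log(1/|\lambda|)\to+\infty$. The tail of the integral over $[\delta,1]$ is $O(e^{-\delta\,\mathrm{Re}\,z})$, negligible, and on $[0,\delta]$ one substitutes the asymptotic form of $\mu$ and extends to $[0,\infty)$ with exponentially small error, recognizing the Gamma integral.

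\medskip

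The main obstacle is purely bookkeeping: tracking the complex logarithm carefully so that the real, one-variable endpoint asymptotics (integration by parts for part (1), Watson's lemma for parts (2)--(3)) transfer to the sector $\mathbb{C}\setminus\mathbb{R}_-$, and verifying that the error estimates are uniform in $\arg\lambda$. Since a full treatment is given in \cite{Kochubei2008}, it suffices here to record the reduction to $\int_0^1\lambda^{\alpha-1}\mu(\alpha)\,d\alpha$ and cite that reference for the details.
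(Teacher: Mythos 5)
Your proposal is correct, and it is essentially the argument behind the cited result: the paper itself gives no proof but refers to \cite[Prop.~2.2]{Kochubei2008}, whose proof likewise reduces $\mathcal{K}(\lambda)$ to $\int_{0}^{1}\lambda^{\alpha-1}\mu(\alpha)\,d\alpha$ and then applies endpoint asymptotics in $z=\log\lambda$ (integration by parts at $\alpha=1$ for $|\lambda|\to\infty$, a Watson-type lemma at $\alpha=0$ for $\lambda\to0$), with the same uniformity in $\arg\lambda$ coming from $\mathrm{Re}\,z=\log|\lambda|$ and $|\arg\lambda|<\pi$.
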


\subsubsection{Classes of Stieltjes functions}

In general if $k\in L_{\mathrm{loc}}^{1}(\mathbb{R}_{+})$, under
the assumption (H), it follows from \eqref{eq:lambdaKlambda} that
the Stieltjes function $\mathcal{K}$ admits the integral representation
\begin{equation}
\mathcal{K}(\lambda)=\int_{(0,\infty)}\frac{1}{\lambda+t}\,d\sigma(t),\quad\lambda>0.\label{eq:LT-k}
\end{equation}
In other words, $\mathcal{K}$ is the Stieltjes transform of the Borel
measure $\sigma$. If $\sigma$ is absolutely continuous with respect
to Lebesgue measure with a continuous density $\varphi$ on $[0,\infty)$,
then $\mathcal{K}$ turns out 
\begin{equation}
\mathcal{K}(\lambda)=\int_{0}^{\infty}\frac{\varphi(t)}{\lambda+t}\,dt.\label{eq:LT-k-1}
\end{equation}
If in addition $\varphi$ has the asymptotic 
\begin{align}
\varphi(t) & \sim Ct^{-\alpha},\quad\mathrm{as}\;t\to\infty,\;0<\alpha<1,\label{wong_infty}\\
\varphi(t) & \sim Ct^{\theta-1},\quad\mathrm{as}\;t\to0,\;0<\theta<1,\label{power_zero}
\end{align}
then, $\varphi\in L_{\mathrm{loc}}^{1}([0,\infty))$ and it follows
from \cite[Thm.~1, page~299]{Wong2001} (see also \cite{FL}) that
the asymptotic (\ref{wong_infty}) implies the asymptotics for $\mathcal{K}$
\begin{equation}
\mathcal{K}(\lambda)\sim C\lambda^{-\alpha},\quad\mathrm{as}\;\lambda\to\infty.\label{wong_infty1}
\end{equation}
For the asymptotic of $\mathcal{K}$ at the origin, we have the following
lemma, see \cite[Lem.~7]{KKS2018}. \begin{lem} \label{lem:GFD-Lemma}Suppose
that 
\begin{equation}
\varphi(t)=Ct^{\theta-1}+\psi(t),\quad0<\theta<1,\label{eq:density}
\end{equation}
where $|\psi(t)|\le Ct^{\theta-1+\delta}$, $0<t\le t_{0}$, and $|\psi(t)|\le Ct^{-\varepsilon}$,
$t>t_{0}$ . Here $0<\delta<1-\theta$ and $\varepsilon>0$. Then
\[
\mathcal{K}(\lambda)\sim C\lambda^{\theta-1},\quad\mathrm{as}\;\lambda\to0.
\]
\end{lem}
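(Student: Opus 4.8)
The plan is to start from the Stieltjes representation \eqref{eq:LT-k-1} and split the Laplace transform along the decomposition \eqref{eq:density}:
\[
\mathcal{K}(\lambda)=C\int_{0}^{\infty}\frac{t^{\theta-1}}{\lambda+t}\,dt+\int_{0}^{\infty}\frac{\psi(t)}{\lambda+t}\,dt=:I_{1}(\lambda)+I_{2}(\lambda).
\]
The first term will carry the asserted power of $\lambda$, and the crux is to verify that $I_{2}(\lambda)=o(\lambda^{\theta-1})$ as $\lambda\to0$.

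For $I_{1}$ I would use the scaling $t=\lambda s$, which gives $I_{1}(\lambda)=C\lambda^{\theta-1}\int_{0}^{\infty}\frac{s^{\theta-1}}{1+s}\,ds=C\lambda^{\theta-1}\Gamma(\theta)\Gamma(1-\theta)$, the beta-type integral being finite exactly because $0<\theta<1$. (Alternatively one may invoke \cite[Thm.~1, p.~299]{Wong2001} as in \eqref{wong_infty1}, but here the constant is explicit.) Thus $I_{1}$ is, up to a universal constant, precisely $\lambda^{\theta-1}$.

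For $I_{2}$ I would split the integral at $t_{0}$ and use the two bounds on $\psi$ separately. On $(t_{0},\infty)$ the hypothesis $|\psi(t)|\le Ct^{-\varepsilon}$ yields $\bigl|\int_{t_{0}}^{\infty}\frac{\psi(t)}{\lambda+t}\,dt\bigr|\le C\int_{t_{0}}^{\infty}t^{-1-\varepsilon}\,dt<\infty$, a bounded quantity, hence $o(\lambda^{\theta-1})$ since $\theta-1<0$ forces $\lambda^{\theta-1}\to\infty$. On $(0,t_{0})$ the bound $|\psi(t)|\le Ct^{\theta-1+\delta}$ together with the same substitution $t=\lambda s$ gives $\bigl|\int_{0}^{t_{0}}\frac{\psi(t)}{\lambda+t}\,dt\bigr|\le C\lambda^{\theta-1+\delta}\int_{0}^{t_{0}/\lambda}\frac{s^{\theta-1+\delta}}{1+s}\,ds\le C\lambda^{\theta-1+\delta}\int_{0}^{\infty}\frac{s^{\theta-1+\delta}}{1+s}\,ds$, and the last integral converges because $\theta+\delta>0$ controls the integrand at the origin while $\theta+\delta<1$ (which is precisely the hypothesis $\delta<1-\theta$) controls it at infinity. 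Since $\delta>0$, this contribution is $O(\lambda^{\theta-1+\delta})=o(\lambda^{\theta-1})$.

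Assembling the pieces gives $\mathcal{K}(\lambda)=\Gamma(\theta)\Gamma(1-\theta)\,C\lambda^{\theta-1}+o(\lambda^{\theta-1})$, which is the claim. I do not expect a serious obstacle here: the only delicate points are the convergence of the two beta-type integrals at their endpoints — exactly where the constraints $0<\theta<1$ and $0<\delta<1-\theta$ are consumed — and keeping track that the error terms are $o$ of a quantity that blows up. The condition $\varepsilon>0$ enters only to render the tail integral over $(t_{0},\infty)$ finite; no faster decay than $t^{-1-\varepsilon}$ is needed there.
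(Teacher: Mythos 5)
Your argument is correct and is essentially the proof behind the cited Lemma~7 of \cite{KKS2018}: split the Stieltjes integral \eqref{eq:LT-k-1} according to the decomposition \eqref{eq:density}, evaluate the leading beta-type integral by scaling, and check that the remainder is $O(\lambda^{\theta-1+\delta})+O(1)=o(\lambda^{\theta-1})$ as $\lambda\to0$. The only cosmetic difference is that you make the constant $C\,\Gamma(\theta)\Gamma(1-\theta)$ explicit, whereas the paper's statement absorbs it into the generic constant $C$ (constants change from line to line there).
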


The function $[0,\infty)\ni\lambda\mapsto e^{-\tau\lambda\mathcal{K}(\lambda)}$,
$\tau>0$ is the composition of a complete Bernstein and a completely
monotone function, then by Theorem\ 3.7 in \cite{Schilling12} it
is a completely monotone function. By Bernstein's theorem (see \cite[Thm.~1.4]{Schilling12}),
for each $\tau\ge0$, there exists a probability measure $\nu_{\tau}$
on $\mathbb{R}_{+}$ such that 
\begin{equation}
e^{-\tau\lambda\mathcal{K}(\lambda)}=\int_{(0,\infty)}e^{-\lambda s}\,d\nu_{\tau}(s).\label{eq:Laplace-family}
\end{equation}
Define 
\begin{equation}
G_{t}(\tau):=\int_{(0,t)}k(t-s)\,d\nu_{\tau}(s).\label{eq:density-G_ttau}
\end{equation}

The function $G_{t}(\tau)$ is a central object of this paper, therefore
we collect some of its properties, see Lem.~3.1 in \cite{Toaldo2015}.

\begin{enumerate}
\item The $t$-Laplace transform of $G_{t}(\tau)$ is given by 
\begin{equation}
g(\lambda,\tau):=\int_{0}^{\infty}e^{-\lambda t}G_{t}(\tau)\,dt=\mathcal{K}(\lambda)e^{-\tau\lambda\mathcal{K}(\lambda)}.\label{eq:tLaplace-G}
\end{equation}
\item The double $(t,\tau)$-Laplace transform of $G_{t}(\tau)$ is equal
to 
\[
\int_{0}^{\infty}\int_{0}^{\infty}e^{-\lambda t-p\tau}G_{t}(\tau)\,dt\,d\tau=\frac{\mathcal{K}(\lambda)}{\lambda\mathcal{K}(\lambda)+p}.
\]
\item For each fixed $t\in\mathbb{R}_{+}$, $G_{t}(\tau)$ is a probability
density, therefore $\mathbb{R}_{+}\ni\tau\mapsto G_{t}(\tau)\in L^{\infty}(\mathbb{R}_{+})\cap L^{1}(\mathbb{R}_{+})$. 
\end{enumerate}

\subsection{Probabilistic Interpretation}

\label{subsec:Probabilistic-Interpretation}As the map $[0,\infty)\ni\lambda\mapsto\Phi(\lambda):=\lambda\mathcal{K}(\lambda)$
is a complete Bernstein function, then we may define a subordinator
$S$ by its Laplace transform as 
\[
\mathbb{E}[e^{-\lambda S_{t}}]=e^{-t\Phi(\lambda)}=e^{-t\lambda\mathcal{K}(\lambda)},\quad\lambda\ge0,
\]
and $\Phi$ is called the \emph{Laplace exponent} or \emph{cumulant}
of $S$. The associated L{\'e}vy measure $\sigma$ has support in
$[0,\infty)$, fulfils 
\begin{equation}
\int_{(0,\infty)}(1\wedge\tau)\,d\sigma(\tau)<\infty,\label{eq:Levy-condition}
\end{equation}
and the Laplace exponent $\Phi$ is represented by 
\begin{equation}
\Phi(\lambda)=\int_{(0,\infty)}(1-e^{-\lambda\tau})\,d\sigma(\tau).\label{eq:Levy-Khintchine}
\end{equation}
The equality \eqref{eq:Levy-Khintchine} is known as the L{\'e}vy-Khintchine
formula for the subordinator $S$. The kernel $k$ is related to the
subordinator $S$ via the L{\'e}vy measure $\sigma$, namely if we
set 
\[
k(t)=\sigma\big((t,\infty)\big),\quad\forall t\in[0,\infty)
\]
it is easy to compute its Laplace transform. In fact, for any $\lambda\ge0$
\[
\int_{0}^{\infty}e^{-\lambda t}\int_{0}^{t}\,d\sigma(s)\,dt=\int_{0}^{\infty}\int_{0}^{s}e^{-\lambda t}\,dt\,d\sigma(s)=\frac{1}{\lambda}\Phi(\lambda)=\mathcal{K}(\lambda).
\]
Denote by $E$ the inverse process of the subordinator $S$, that
is

\begin{equation}
E_{t}:=\inf\{s\ge0:\;S_{s}\ge t\}=\sup\{s\ge0:\;S_{t}\le s\}.\label{eq:inverse-sub}
\end{equation}
Then the marginal density of $E(t)$ is the function $G_{t}(\tau)$,
$t,\tau\ge0$, more precisely 
\[
G_{t}(\tau)\,d\tau=\partial_{\tau}\mathbf{P}(E_{t}\le\tau)=\partial_{\tau}\mathbf{P}(S_{\tau}\ge t)=-\partial_{\tau}\mathbf{P}(S_{\tau}<t).
\]

\section{Evolution Equations and the General Method}

\label{sec:GFEE}In this section we develop a general method to study
the long time behavior of the subordination by the function $G_{t}(\tau)$
(introduced in \eqref{eq:density-G_ttau}) of the solution $u_{0}(x,t)$
of a Cauchy problem (CP). We choose three of these CPs, namely with
exponential time decay, the heat equation and linear non-local diffusions.

From now on $L$ denotes always a slowly varying function (SVF), that
is 
\[
\lim_{x\to\infty}\frac{L(\lambda x)}{L(x)}=1,\qquad\mathrm{for\;any\;}\lambda>0,
\]
and $C$, $C'$ are constants which change from line to line.

\subsection{The General Method}

\label{subsec:The-General-Method}Let $A$ be a generic (heuristic)
Markov generator defined on functions $u_{0}(x,t)$, $t>0$, $x\in\mathbb{R}^{d}$.
In Subsection~\ref{subsec:Applications-examples} we present concrete
examples of such Markov generators. Consider the evolution equations
of the following type 
\begin{equation}
\begin{cases}
{\displaystyle \frac{\partial u_{0}(x,t)}{\partial t}} & =Au_{0}(x,t)\\
u_{0}(x,0) & =\xi(x),
\end{cases}\label{eq:Evol-eq-1}
\end{equation}
which we assume a solution $u_{0}(x,\cdot)\in L^{1}(\mathbb{R}_{+})$
is known. We are interested in studying the subordination of the solution
$u_{0}(x,t)$ by the density $G_{t}(\tau)$, that is the function
$u(x,t)$ defined by 
\begin{equation}
u(x,t):=\int_{0}^{\infty}u_{0}(x,\tau)G_{t}(\tau)\,d\tau,\quad x\in\mathbb{R}^{d},\;t\ge0.\label{eq:subordination}
\end{equation}
The subordination principle, see \cite{Bazhlekova00}, tells that
$u(x,t)$ is the solution of the general fractional differential equation
\begin{equation}
\begin{cases}
(\mathbb{D}_{t}^{(k)}u)(x,t) & =Au(x,t)\\
u(x,0) & =\xi(x),
\end{cases}\label{eq:Evol-eq-k}
\end{equation}
with the same operator $A$ acting in the spatial variables $x$ and
the same initial condition $\xi$. \begin{rem} \label{rem:Solutions} 
\begin{enumerate}
\item The appropriate notions of the solutions of \eqref{eq:Evol-eq-1}
and \eqref{eq:Evol-eq-k} depend on the specific setting. They were
explained 
\begin{enumerate}
\item in \cite{Kochubei11} for the case where $A$ is the Laplace operator
on $\mathbb{R}^{n}$, 
\item in \cite{Bazhlekova00,Baz01,Bazhlekova2015} with abstract semigroup
generators for special classes of kernels $k$, 
\item in \cite{Pruss12} for abstract Volterra equations. 
\end{enumerate}
\item There is also a probabilistic interpretation of the subordination
identities (see, for example, \cite{Kolokoltsov2011}). In the models
of statistical dynamics we deal with a subordination of measure flows
that will give a weak solution to the corresponding general fractional
equation. 
\end{enumerate}
\end{rem}

In order to study the time evolution of $u(x,t)$ one possibility
is to define its Cesaro mean 
\[
M_{t}\big(u(x,t)\big):=\frac{1}{t}\int_{0}^{t}u(x,s)\,ds
\]
and investigate its long time behavior. Notice that the Cesaro mean
of $u(x,t)$ may be written as 
\begin{align}
M_{t}\big(u(x,t)\big) & =\int_{0}^{\infty}u_{0}(x,\tau)\left(\frac{1}{t}\int_{0}^{t}G_{s}(\tau)\,ds\right)d\tau\nonumber \\
 & =\int_{0}^{\infty}u_{0}(x,\tau)M_{t}\big(G_{t}(\tau)\big)d\tau.\label{eq:Cesaro-mean-u}
\end{align}
Therefore, we are led to investigate the Cesaro mean of the density
$G_{t}(\tau)$ which determine the long time behavior of $u(x,t)$
once the integral in \eqref{eq:Cesaro-mean-u} exists. To this end,
first we introduce a suitable class of admissible $k(t)$, then we
show a theorem which, for each fixed $\tau\in[0,\infty)$, gives a
connection between the Cesaro mean of $G_{t}(\tau)$ and Cesaro mean
of $k(t)$. We assume $u_{0}(x,\cdot)\in L^{1}(\mathbb{R}_{+})$,
then the asymptotic of the integral in \eqref{eq:Cesaro-mean-u} is
a consequence of the pointwise convergence in $\tau$ and a uniform
bound that gives the possibility to apply Lebesgue's dominated convergence
theorem. \begin{defn}[Admissible kernels - $\mathbb{K}(\mathbb{R}_{+})$]
The subset $\mathbb{K}(\mathbb{R}_{+})\subset L_{\mathrm{loc}}^{1}(\mathbb{R}_{+})$
of admissible kernels $k$ is defined by those elements in $L_{\mathrm{loc}}^{1}(\mathbb{R}_{+})$
satisfying (H) such that for some $s_{0}>0$ 
\[
\liminf_{\lambda\to0+}\frac{1}{\mathcal{K}(\lambda)}\int_{0}^{\nicefrac{s_{0}}{\lambda}}k(t)\,dt>0\tag*{(A1)}
\]
and 
\[
\lim_{\genfrac{}{}{0pt}{2}{t,r\to\infty}{\frac{t}{r}\to1}}\left(\int_{0}^{t}k(s)\,ds\right)\left(\int_{0}^{r}k(s)\,ds\right)^{-1}=1.\tag*{(A2)}
\]
\end{defn}

The assumptions (A1) and (A2) are easy to check for the classes we
introduced in Section~\ref{sec:GFD}.

The following theorem establishes an asymptotic relation between the
Cesaro means of the density $G_{t}(\tau)$ and Cesaro mean of $k(t)\in\mathbb{K}(\mathbb{R}_{+})$,
for each fixed $\tau\in[0,\infty)$. \begin{thm} \label{thm:main-result}Let
$\tau\in[0,\infty)$ be fixed and $k\in\mathbb{K}(\mathbb{R}_{+})$
a given admissible kernel. Define the map $G_{\cdot}(\tau):[0,\infty)\longrightarrow\mathbb{R}_{+}$,
$t\mapsto G_{t}(\tau)$ such that $\int_{0}^{\infty}e^{-\lambda t}G_{t}(\tau)\,dt$
exists for all $\lambda>0$. Then 
\[
\lim_{t\to\infty}\left(\int_{0}^{t}G_{s}(\tau)\,ds\right)\left(\int_{0}^{t}k(s)\,ds\right)^{-1}=1
\]
or 
\[
M_{t}\big(G_{t}(\tau)\big)=\frac{1}{t}\int_{0}^{t}G_{s}(\tau)\,ds\sim\frac{1}{t}\int_{0}^{t}k(s)\,ds=M_{t}\big(k(t)\big),\quad t\to\infty
\]
 and $M_{t}\big(G_{t}(\tau)\big)$ is uniformly bounded in $\tau\in\mathbb{R}_{+}$.
\end{thm}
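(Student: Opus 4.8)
The plan is to reduce the claim to a ratio Tauberian theorem of Karamata type for the two non-decreasing, non-negative functions
\[
U_\tau(t):=\int_0^t G_s(\tau)\,ds,\qquad V(t):=\int_0^t k(s)\,ds,\qquad t\ge 0 .
\]
First I would record the Laplace transforms. Since the $t$-Laplace transform of $G_\cdot(\tau)$ is $g(\lambda,\tau)=\mathcal{K}(\lambda)e^{-\tau\lambda\mathcal{K}(\lambda)}$ by \eqref{eq:tLaplace-G} and $\mathscr{L}k=\mathcal{K}$, division by $\lambda$ gives $(\mathscr{L}U_\tau)(\lambda)=\lambda^{-1}\mathcal{K}(\lambda)e^{-\tau\mathcal{L}(\lambda)}$ and $(\mathscr{L}V)(\lambda)=\lambda^{-1}\mathcal{K}(\lambda)$, so that
\[
\frac{(\mathscr{L}U_\tau)(\lambda)}{(\mathscr{L}V)(\lambda)}=e^{-\tau\mathcal{L}(\lambda)},
\]
which tends to $1$ as $\lambda\to 0^+$, for every fixed $\tau\ge 0$, by \eqref{eq:H2}. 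Note also that $V(\infty)=\lim_{\lambda\to 0^+}\mathcal{K}(\lambda)=\infty$ by \eqref{eq:H1} and monotone convergence, so $V$ is increasing and unbounded, and both transforms are finite on $(0,\infty)$ by (H) and the standing hypothesis that $\mathscr{L}(G_\cdot(\tau))$ exists.

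Next I would invoke the modified version of the ratio Tauberian theorem of \cite{Li2007}: for non-decreasing $U_\tau$ and $V$ with $V$ unbounded, whose Laplace transforms exist on $(0,\infty)$ and are asymptotically equivalent as $\lambda\to 0^+$, one concludes $U_\tau(t)\sim V(t)$ as $t\to\infty$, \emph{provided} a Tauberian side condition holds. The admissibility hypotheses on $k$ supply exactly this: condition (A1), namely $\liminf_{\lambda\to 0^+}\mathcal{K}(\lambda)^{-1}\int_0^{s_0/\lambda}k(t)\,dt>0$, reads (since $\mathcal{K}(\lambda)=\lambda(\mathscr{L}V)(\lambda)$) as a non-degeneracy bound $V(s_0/\lambda)\gtrsim\mathcal{K}(\lambda)$ preventing the mass of $k$ from escaping to infinity, while condition (A2), namely $V(t)/V(r)\to 1$ as $t/r\to 1$, is the slow-oscillation property that upgrades the Tauberian conclusion from convergence along scaled arguments to the genuine ratio limit $U_\tau(t)/V(t)\to 1$. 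This establishes the two displayed asymptotics, and dividing by $t$ gives $M_t\big(G_t(\tau)\big)\sim M_t\big(k(t)\big)$ as $t\to\infty$.

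For the uniform bound in $\tau$ no Tauberian input is needed. Since $G_s(\tau)\ge 0$ and $e^{-s/t}\ge e^{-1}$ for $s\in[0,t]$,
\[
M_t\big(G_t(\tau)\big)=\frac{1}{t}\int_0^t G_s(\tau)\,ds\le\frac{e}{t}\int_0^\infty e^{-s/t}G_s(\tau)\,ds=\frac{e}{t}\,g(1/t,\tau)=e\,\mathcal{L}(1/t)\,e^{-\tau\mathcal{L}(1/t)}\le e\,\mathcal{L}(1/t),
\]
using \eqref{eq:tLaplace-G} and $\mathcal{L}(1/t)=t^{-1}\mathcal{K}(1/t)$. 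The right-hand side is independent of $\tau$; and since $\mathcal{L}$ is a complete Bernstein function, hence non-decreasing, the map $t\mapsto\mathcal{L}(1/t)$ is non-increasing and bounded on every interval $[t_1,\infty)$ with $t_1>0$ (in fact $\mathcal{L}(1/t)\to 0$ as $t\to\infty$ by \eqref{eq:H2}). This yields the asserted uniform boundedness in $\tau$, together with the domination needed to pass the limit through the integral in \eqref{eq:Cesaro-mean-u}.

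The step I expect to be the main obstacle is the precise formulation and application of the \emph{modified} ratio Tauberian theorem: one must verify that (A1) and (A2) are exactly the side conditions its hypotheses require, adapt the theorem of \cite{Li2007} so as to admit the scaling parameter $s_0$ and to demand a regularity condition on $V$ only (about $U_\tau$ we know nothing beyond monotonicity and the explicit transform), and check that no additional smoothness of $t\mapsto G_t(\tau)$ is needed. The Laplace-transform identities and the uniform estimate are routine by comparison.
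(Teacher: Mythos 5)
Your proposal is correct and follows essentially the same route as the paper: compute the $t$-Laplace transforms, observe that the ratio equals $e^{-\tau\lambda\mathcal{K}(\lambda)}\to 1$ as $\lambda\to 0^{+}$, and invoke the ratio Tauberian theorem of Li--Sato--Shaw (the paper cites its Corollary~3.3 with $x=1$), the admissibility conditions (A1)--(A2) being precisely the Tauberian side conditions of that result. Your explicit Markov-type estimate $M_t\big(G_t(\tau)\big)\le e\,\mathcal{L}(1/t)$ is just a more detailed rendering of the paper's one-line remark that the uniform bound follows from $e^{-\tau\lambda\mathcal{K}(\lambda)}\le 1$.
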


\begin{proof} The $t$-Laplace transform of $G_{t}(\tau)$ exists
for any $\lambda>0$, cf.\ \eqref{eq:tLaplace-G}. Then the result
of the theorem for each $\tau>0$ follows from Corollary 3.3. in \cite{Li2007}
with $X_{+}=\mathbb{R}_{+}$, $G_{t}=u(t)$, $k=g$ and $x=1$. The
uniform bound in $\tau$ follows from the obvious uniform bound $e^{-\tau\lambda\mathcal{K}(\lambda)}\leq1.$
\end{proof}

We have now all the necessary tools to investigate the Cesaro mean
of the density $G_{t}(\tau)$ for all the classes of admissible kernels.
The following three classes of admissible kernels $k\in\mathbb{K}(\mathbb{R}_{+})$
are studied, and they are given in terms of their Laplace transform
$\mathcal{K}(\lambda)$ as $\lambda\to0$ 
\[
\mathcal{K}(\lambda)=\lambda^{\theta-1},\quad0<\theta<1.\tag*{(C1)}
\]
\[
\mathcal{K}(\lambda)\sim\lambda^{-1}L\left(\frac{1}{\lambda}\right),\quad L(x):=\mu(0)\log(x)^{-1}.\tag*{(C2)}
\]
\[
\mathcal{K}(\lambda)\sim\lambda^{-1}L\left(\frac{1}{\lambda}\right),\quad L(x):=C\log(x)^{-1-s},\;s>0,\;C>0.\tag*{(C3)}
\]
To idea to study the Cesaro mean of the density $G_{t}(\tau)$, having
in mind the result of Theorem \ref{thm:main-result}, is to check
the behavior of $\mathcal{K}(\lambda)$ as $\lambda\to0$ and an application
of the Karamata-Tauberian theorem.
\begin{description}
\item [{(C1).}] We have in this case 
\[
\mathcal{K}(\lambda)=\lambda^{\theta-1}=\lambda^{-\rho}L\left(\frac{1}{\lambda}\right),
\]
where $\rho:=1-\theta\ge0$ and $L(x):=1$ is a `trivial' SVF. Then
we obtain as $t\to\infty$ 
\[
\int_{0}^{t}k(s)\,ds\sim Ct^{\rho}L(t)\Leftrightarrow M_{t}(k(t))\sim Ct^{-\theta}.
\]
\item [{(C2).}] We have, as $\lambda\to0$ 
\[
\mathcal{K}(\lambda)\sim\lambda^{-1}\log\left(\frac{1}{\lambda}\right)^{-1}\mu(0)=\lambda^{-1}L\left(\frac{1}{\lambda}\right),\;\mathrm{as}\;\lambda\to0,
\]
where $L(x):=\mu(0)\log(x)^{-1}$ is a SVF. Hence, we have $M_{t}(k(t))\sim C\log(t)^{-1}$,
as $t\to\infty$.
\item [{(C3).}] The Laplace transform for each $s>0$ 
\[
\mathcal{K}(\lambda)\sim C\lambda^{-1}\log\left(\frac{1}{\lambda}\right)^{-1-s}=\lambda^{-1}L\left(\frac{1}{\lambda}\right),\quad\mathrm{as}\;\lambda\to0,
\]
where $L(x):=C\log(x)^{-1-s}$ is a SVF. It follows that $M_{t}(k(t))\sim C\log(t)^{-1-s}$,
as $t\to\infty.$
\end{description}

\subsection{Applications to Concrete Examples}

\label{subsec:Applications-examples}

\subsubsection{Exponential decay}

\label{subsec:Exponential-decay}Let us assume that the solution $u_{0}(x,t)$
of the Cauchy problem \eqref{eq:Evol-eq-1} is such that 
\begin{equation}
\sup_{x\in\mathbb{R}^{d}}|u_{0}(x,t)|\le Ce^{-\gamma t},\quad\gamma>0.\label{eq:exp-decay}
\end{equation}
This behavior of $u_{0}$ may be justified in a number of cases of
PDEs. We derive the long time behavior of the subordination $u(x,t)$
defined in \eqref{eq:subordination} using the general method above.
As the function $\mathbb{R}_{+}\ni t\mapsto u_{0}(x,t)\in\mathbb{R}_{+}$
is integrable, then the long time behavior of the Cesaro mean of $u(x,t)$
reduces to the study of the Cesaro mean of the admissible kernel $k(t)$.
We derive the long time behavior of the Cesaro mean of $k(t)$ through
its Laplace transform $\mathcal{K}(\lambda)$ by an application of
the Karamata-Tauberian theorem. 
\begin{description}
\item [{(C1).}] For the first class of kernels (C1) it is easy to see that
the Cesaro mean of $k$ is given, as before, by 
\begin{equation}
M_{t}(u(x,\cdot))\sim Ct^{-\theta},\;t\to\infty.\label{eq:Cesaro-mean-C1}
\end{equation}
\item [{(C2).}] For the class (C2), we obtain 
\begin{equation}
M_{t}(u(x,\cdot))\sim C\log(t)^{-1},\;t\to\infty.\label{eq:Cesaro-mean-C2}
\end{equation}
\item [{(C3).}] Now we look at class (C3) which gives 
\begin{equation}
M_{t}(u(x,\cdot))\sim C\log(t)^{-1-s},\;t\to\infty.\label{eq:Cesaro-mean-C3}
\end{equation}
\end{description}

\subsubsection{The Heat Equation}

We consider the Cauchy problem given by 
\begin{equation}
\begin{cases}
{\displaystyle \frac{\partial u_{0}(x,t)}{\partial t}} & =\Delta u_{0}(x,t)\\
u_{0}(x,0) & =\varphi(x),
\end{cases}\label{eq:Cauchy-problem}
\end{equation}
where $\varphi\in L^{1}(\mathbb{R}^{d})$. If $\mathcal{G}_{t}(x)$
denotes the fundamental solution (also known as Green function) of
the Cauchy problem \eqref{eq:Cauchy-problem}, then the solution $u_{0}(x,t)$
is written as a convolution between the initial condition $\varphi$
and $\mathcal{G}_{t}$, that is 
\[
u_{0}(x,t)=(\varphi*\mathcal{G}_{t})(x).
\]
Using the Young convolution inequality $\|u_{0}(\cdot,t)\|_{\infty}\le\|\varphi\|_{L^{1}}\|\mathcal{G}_{t}\|_{\infty}$,
the solution $u_{0}(x,t)$ is continuous in $t$ and bounded in $x$
in the supremum norm. In addition, it is not difficult to see that
$u_{0}(x,t)$ satisfies 
\begin{equation}
\sup_{x\in\mathbb{R}^{d}}|u_{0}(x,\tau)|\le C,\;\tau\in[0,1]\label{eq:Assump-uzero-1}
\end{equation}
and 
\begin{equation}
\sup_{x\in\mathbb{R}^{d}}|u_{0}(x,\tau)|\le\frac{C}{\tau^{d/2}},\;\tau\in]1,\infty).\label{eq:Assump-uzero-2}
\end{equation}

The function $u(x,t)$ is defined as the subordination of $u_{0}(x,t)$
by the density $G_{t}(\tau)$, see \eqref{eq:subordination}. 

As $u_{0}(x,t)$ is bounded in a neighbourhood of $\tau=0+$, then
the only important contribution for the long time behavior of $u(x,t)$
comes from $\tau>1$. On the other hand, the map $[1,\infty)\ni\tau\mapsto\frac{1}{\tau^{d/2}}\in\mathbb{R}_{+}$
belongs to $L^{1}(\mathbb{R}_{+})$ for $d\ge3$. Therefore using
the results from Subsection~\ref{subsec:The-General-Method} we may
derive the long time behavior of the Cesaro mean of $u(x,t)$ as in
the previous example for each classes (C1), (C2), and (C3). See \eqref{eq:Cesaro-mean-C1},
\eqref{eq:Cesaro-mean-C2} and \eqref{eq:Cesaro-mean-C3}. Notice
that for $d=1$ and $d=2$ this method does not allow us to take any
conclusion on the long time behavior of the Cesaro mean of $u(x,t)$
since $\frac{1}{\tau^{d/2}}\notin L^{1}(\mathbb{R}_{+})$. On Section~\ref{sec:Alternative-Methods}
we use an alternative method which allow us to do so.

\subsubsection{Linear Non-local Diffusion}

\label{subsec:LNonlocal-Diffusion}We consider the linear non-local
diffusion, see for instance \cite[Ch.~1]{Rossi2010} 
\begin{equation}
\begin{cases}
{\displaystyle \frac{\partial u_{0}(x,t)}{\partial t}} & =a*u_{0}(x,t)-u_{0}(x,t)={\displaystyle \int_{\mathbb{R}^{d}}a(x-y)(u_{0}(y,t)\,dy-u_{0}(x,t)}\\
u_{0}(x,0) & =\varphi(x),
\end{cases}\label{eq:LNonlocalDiff}
\end{equation}
for $x\in\mathbb{R}^{d}$, $t>0$, and $a\in C(\mathbb{R}^{d},\mathbb{R})$
is a radial density function, that is a nonnegative radial function
with $a(0)>0$ and $\langle a\rangle:=\int_{\mathbb{R}^{d}}a(x)\,dx=1.$
The notion of a solution of \eqref{eq:LNonlocalDiff} is a function
$u_{0}\in C(\mathbb{R}_{+},L^{1}(\mathbb{R}^{d}))$ such that \eqref{eq:LNonlocalDiff}
is satisfied in the integral sense 
\[
u_{0}(x,t)=\varphi(x)+\int_{0}^{t}\int_{\mathbb{R}^{d}}a(x-y)u_{0}(y,s)\,dy-u_{0}(x,s)\,ds.
\]
The existence and uniqueness of solutions of the CP \eqref{eq:LNonlocalDiff}
may be shown using the Fourier transform technic. In the sequel, $\hat{f}$
denotes de Fourier transform of $f\in L^{1}(\mathbb{R}^{d})$ defined
by 
\[
\hat{f}(\xi):=\int_{\mathbb{R}^{d}}e^{-i\langle x,\xi\rangle}f(x)\,dx,
\]
where $\langle\cdot,\cdot\rangle$ denotes de scalar product in $\mathbb{R}^{d}$.
The following theorem states under which conditions on $a$ and $\varphi$
the CP \eqref{eq:LNonlocalDiff} has a unique solution, see Theorem~1.3
in \cite[Ch.~1]{Rossi2010} for more details and other properties
of the solution $u_{0}(x,t)$. Here we emphasize the uniform bound
of $u_{0}(x,t)$ in $x$ as the most relevant for our considerations
below. \begin{thm} Assume that there exist $A>0$ and $0<r\le2$
such that 
\[
\hat{a}(\xi)=1-A|\xi|^{r}+o(|\xi|^{r})\quad\mathrm{as}\;\xi\to0.
\]
For any nonnegative $\varphi$ such that $\varphi,\hat{\varphi}\in L^{1}(\mathbb{R}^{d})$,
there exits a unique solution $u_{0}(x,t)$ of the CP \eqref{eq:LNonlocalDiff}
such that 
\[
\|u_{0}(\cdot,t)\|_{L^{\infty}(\mathbb{R}^{d})}\le Ct^{-d/r}.
\]
\end{thm}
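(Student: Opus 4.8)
The plan is to diagonalize \eqref{eq:LNonlocalDiff} by the spatial Fourier transform. Applying $\mathcal{F}$ to the integral form of \eqref{eq:LNonlocalDiff} and using that $\mathcal{F}\colon L^{1}(\mathbb{R}^{d})\to C_{0}(\mathbb{R}^{d})$ is bounded and commutes with Bochner integration, one gets that for each fixed $\xi$ the function $t\mapsto\hat{u}_{0}(\xi,t)$ is absolutely continuous and solves the scalar linear ODE $\partial_{t}\hat{u}_{0}(\xi,t)=(\hat{a}(\xi)-1)\hat{u}_{0}(\xi,t)$, $\hat{u}_{0}(\xi,0)=\hat{\varphi}(\xi)$, whence
\[
\hat{u}_{0}(\xi,t)=e^{t(\hat{a}(\xi)-1)}\hat{\varphi}(\xi).
\]
Injectivity of $\mathcal{F}$ then yields \emph{uniqueness} in $C(\mathbb{R}_{+},L^{1}(\mathbb{R}^{d}))$. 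For \emph{existence} I would realize the associated convolution semigroup $P_{t}:=e^{-t}\sum_{n\ge0}\frac{t^{n}}{n!}a^{*n}$ with $a^{*0}:=\delta_{0}$; since every $a^{*n}$ with $n\ge1$ is a probability density, $P_{t}$ is a sub-probability measure with atomic part $e^{-t}\delta_{0}$ and absolutely continuous part of mass $1-e^{-t}$, one has $\widehat{P_{t}}=e^{t(\hat{a}-1)}$, and $u_{0}(\cdot,t):=P_{t}*\varphi\in C(\mathbb{R}_{+},L^{1}(\mathbb{R}^{d}))$ solves the integral equation with the Fourier transform just computed (alternatively one may simply quote Theorem~1.3 in \cite[Ch.~1]{Rossi2010}).

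For the decay estimate I would use that $a$ is radial, so $\hat{a}$ is real and $\hat{a}(\xi)=\int_{\mathbb{R}^{d}}a(x)\cos\langle x,\xi\rangle\,dx\le1$, and then invert:
\[
|u_{0}(x,t)|\le\frac{1}{(2\pi)^{d}}\int_{\mathbb{R}^{d}}e^{t(\hat{a}(\xi)-1)}|\hat{\varphi}(\xi)|\,d\xi .
\]
Split the integral at $|\xi|=\delta$. On $\{|\xi|<\delta\}$ the hypothesis $\hat{a}(\xi)=1-A|\xi|^{r}+o(|\xi|^{r})$ gives, for $\delta$ small enough, $\hat{a}(\xi)-1\le-\tfrac{A}{2}|\xi|^{r}$; estimating $|\hat{\varphi}|\le\|\hat{\varphi}\|_{\infty}\le\|\varphi\|_{L^{1}}$ and rescaling $\eta=t^{1/r}\xi$ turns this piece into $Ct^{-d/r}\int_{\mathbb{R}^{d}}e^{-\frac{A}{2}|\eta|^{r}}\,d\eta=C't^{-d/r}$. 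On $\{|\xi|\ge\delta\}$ I claim there is $c>0$ with $\hat{a}(\xi)\le1-c$: by Riemann--Lebesgue $\hat{a}(\xi)\to0$ as $|\xi|\to\infty$, while for every $\xi\ne0$ one has $\hat{a}(\xi)<1$ since $a$ is continuous with $a(0)>0$, hence strictly positive on a ball on which $\cos\langle x,\xi\rangle<1$ on a set of positive measure; compactness of the annulus $\{\delta\le|\xi|\le R\}$ then gives the claim. Consequently $\int_{|\xi|\ge\delta}e^{t(\hat{a}(\xi)-1)}|\hat{\varphi}(\xi)|\,d\xi\le e^{-ct}\|\hat{\varphi}\|_{L^{1}}$. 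Adding the two pieces, $\|u_{0}(\cdot,t)\|_{L^{\infty}}\le C(t^{-d/r}+e^{-ct})$; since $e^{-ct}\le C''t^{-d/r}$ for $t\ge1$, and $\|u_{0}(\cdot,t)\|_{L^{\infty}}\le(2\pi)^{-d}\|\hat{\varphi}\|_{L^{1}}\le Ct^{-d/r}$ trivially for $t\le1$, the bound $\|u_{0}(\cdot,t)\|_{L^{\infty}(\mathbb{R}^{d})}\le Ct^{-d/r}$ follows for all $t>0$.

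The only step I do not expect to be routine is the uniform gap $1-\hat{a}(\xi)\ge c$ on $\{|\xi|\ge\delta\}$: this is exactly where the standing assumptions on $a$ (continuity together with $a(0)>0$, which prevent $\hat{a}$ from touching $1$ away from the origin) are essential, and without such a gap the frequency integral over $\{|\xi|\ge\delta\}$ could not even be dominated by $\|\hat{\varphi}\|_{L^{1}}$. The Fourier/ODE reduction, the convolution-semigroup construction, and the small-frequency scaling are otherwise standard.
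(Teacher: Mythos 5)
Your proof is correct: the Fourier reduction $\hat{u}_{0}(\xi,t)=e^{t(\hat{a}(\xi)-1)}\hat{\varphi}(\xi)$, the Poisson-type semigroup $e^{-t}\sum_{n}\frac{t^{n}}{n!}a^{*n}$ for existence, and the frequency splitting with the rescaling $\eta=t^{1/r}\xi$ near the origin plus the spectral gap $1-\hat{a}(\xi)\ge c$ on $\{|\xi|\ge\delta\}$ (which indeed follows from $\hat{a}<1$ off the origin, continuity, and Riemann--Lebesgue) is exactly the standard argument. The paper itself gives no proof but simply quotes Theorem~1.3 of \cite{Rossi2010}, and your argument is essentially the one found there, so there is nothing to add.
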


\begin{rem} As the solution $u_{0}(x,t)$ is time continuous and
uniformly bounded in $x$, then it is easy to derive the following
properties of $u_{0}(x,t)$ \end{rem}

\begin{equation}
\sup_{x\in\mathbb{R}^{d}}|u_{0}(x,\tau)|\le C,\;\tau\in[0,1],\label{eq:estimate-uzero-LNLD1}
\end{equation}

\begin{equation}
\sup_{x\in\mathbb{R}^{d}}|u_{0}(x,\tau)|\le C\tau^{-d/r},\;\tau\in]1,\infty).\label{eq:estimate-uzero-LNLD2}
\end{equation}

Our aim now is to study the function $u(x,t)$ given by the subordination
of $u_{0}(x,t)$ by the density $G_{t}(\tau)$ as in \eqref{eq:subordination},
that is determine the long time behavior of $u(x,t)$ for all the
classes of admissible kernels $k\in\mathbb{K}(\mathbb{R}_{+})$.

For $d\ge3$ the function $\mathbb{R}_{+}\ni\tau\mapsto\tau^{-d/r}\in\mathbb{R}_{+}$
is integrable, therefore the long time behavior of $M_{t}(u(x,t))$
reduces to that of $M_{t}(G_{t}(\tau))$. For the three classes of
admissible kernels $k\in\mathbb{K}(\mathbb{R}_{+})$, they are given
by \eqref{eq:Cesaro-mean-C1}, \eqref{eq:Cesaro-mean-C2}, \eqref{eq:Cesaro-mean-C3}.

\section{Alternative Method for Subordinated Dynamics}

\label{sec:Alternative-Methods}In this section we investigate the
long time behavior of the subordination dynamics $u(x,t)$ for the
three CP problems from Subsection~\ref{subsec:Applications-examples}
using an alternative method, the Laplace transform. The possibility
to apply this alternative method is related to the a priori information
of the initial solution $u_{0}(x,t)$. Here we would like to emphasize
the results obtained for the heat equation and the linear non-local
diffusion. More precisely, the general method from Section~\ref{sec:GFEE}
does not allow us to obtain the long time behavior of $M_{t}(u(x,t))$
for these examples if the dimension $d=1$ and $d=2$, while the Laplace
transform method does for any dimension $d\ge1$.

\subsection{Exponential decay}

We have the exponential decay of the initial solution $u_{0}(x,t)$,
see \eqref{eq:exp-decay}. Computing the $t$-Laplace transform of
$u(x,t)$ and using \eqref{eq:tLaplace-G} to obtain 
\[
(\mathscr{L}u(x,\cdot))(\lambda)=C\frac{\mathcal{K}(\lambda)}{\lambda\mathcal{K}(\lambda)+\gamma}.
\]

We investigate each class of admissible kernels $k\in\mathbb{K}(\mathbb{R}_{+})$,
that is (C1), (C2) and (C3). 
\begin{description}
\item [{(C1).}] It follows that 
\[
(\mathscr{L}u(x,\cdot))(\lambda)=C\frac{\lambda^{\theta-1}}{\lambda^{\theta}+\gamma}=\lambda^{-(1-\theta)}L\left(\frac{1}{\lambda}\right),\quad L(x):=\frac{C}{x^{-\theta}+\gamma}.
\]
Then the Karamata-Tauberian theorem gives 
\[
M_{t}(u(x,t))\sim Ct^{-\theta}\frac{1}{t^{-\theta}+\gamma}\sim Ct^{-\theta},\;t\to\infty.
\]
\item [{(C2).}] We have, as $\lambda\to0$ 
\[
(\mathscr{L}u(x,\cdot))(\lambda)\sim C\lambda^{-1}L\left(\frac{1}{\lambda}\right),\quad L(x):=C\frac{(\log(x))^{-1}}{(\log(x))^{-1}+\gamma}.
\]
And again, an application of the Karamata-Tauberian theorem yields
\[
M_{t}(u(x,t))\sim C\log(t)^{-1}\frac{1}{(\log(t))^{-1}+\gamma}\sim C\log(t)^{-1},\quad t\to\infty.
\]
\item [{(C3).}] For that class one obtains 
\[
(\mathscr{L}u(x,\cdot))(\lambda)\sim\lambda^{-1}L\left(\frac{1}{\lambda}\right),\quad L(x):=C\frac{(\log(x))^{-1-s}}{(\log(x))^{-1-s}+\gamma}.
\]
By the Karamata-Tauberian theorem we have 
\[
M_{t}(u(x,t))\sim C\log(t)^{-1-s}\frac{1}{(\log(t))^{-1-s}+\gamma}\sim C\log(t)^{-1-s},\;t\to\infty.
\]
\end{description}
In conclusion, this alternative method reproduces the same type of
decay of the Cesaro mean of $u(x,t)$ as the general method from Section~\ref{sec:GFEE}
for this example.

\subsection{The Heat Equation}

We compute the $t$-Laplace transform of $u(x,t)$ and then apply
the Karamata-Tauberian theorem. We have, using again \eqref{eq:tLaplace-G},
that 
\[
(\mathscr{L}u(x,\cdot))(\lambda)=\mathcal{K}(\lambda)\int_{0}^{\infty}u_{0}(x,\tau)e^{-\tau\lambda\mathcal{K}(\lambda)}\,d\tau.
\]
It follows from \eqref{eq:Assump-uzero-1} and \eqref{eq:Assump-uzero-2}
that the solution $u_{0}(x,\tau)$ is bounded in a neighborhood of
$\tau=0+$, hence the long time behavior of $M_{t}(u(x,t))$ is only
influenced as $\tau>1$, that is the factor 
\[
CK(\lambda)\int_{1}^{\infty}\tau^{-d/2}e^{-\tau\lambda\mathcal{K}(\lambda)}\,d\tau.
\]
The integral on the right-hand side is computed using the upper incomplete
Gamma function 
\begin{equation}
\int_{b}^{\infty}\tau^{\nu}e^{-\tau x}\,d\tau=x^{-\nu-1}\Gamma(\nu+1,bx),\quad\Re(x)>0.\label{eq:IGF}
\end{equation}
Hence, neglecting the constant for $\tau\in[0,1]$, the $t$-Laplace
transform of $u(x,t)$ has the form 
\[
(\mathscr{L}u(x,\cdot))(\lambda)=C\mathcal{K}(\lambda)(\lambda\mathcal{K}(\lambda))^{d/2-1}\Gamma(1-d/2,\lambda\mathcal{K}(\lambda)).
\]
Now we study each class of admissible kernels $k$ satisfying (C1),
(C2) and (C3). Once more the result in each case follows by an application
of the Karamata-Tauberian theorem.
\begin{description}
\item [{(C1).}] We have $\mathcal{K}(\lambda)=\lambda^{\theta-1}$ and
we distinguish the following cases: 
\begin{enumerate}
\item For $d=1$, as $\lambda\to0$ 
\[
(\mathscr{L}u(x,\cdot))(\lambda)=C\lambda^{-(1-\theta/2)}\Gamma(1/2,\lambda^{\theta})=\lambda^{-\rho}L\left(\frac{1}{\lambda}\right),
\]
where $\rho=1-\theta/2$ and $L(x):=C\Gamma(1/2,x^{-\theta})$ is
a SVF. In fact, to see that $L(x)$ is a SVF first we use the relation
\begin{equation}
\Gamma(s,x)=\Gamma(s)-\gamma(s,x),\quad s\neq0,-1,-2,\ldots,\label{eq:IGF-positive}
\end{equation}
where $\gamma(s,x)$ is the lower incomplete Gamma function, the fact
that $x^{-\theta}\to0$ when $x\to\infty$ together with 
\begin{equation}
\gamma(s,x)\sim\frac{x^{s}}{s},\quad x\to0.\label{eq:asym-ILG-zero}
\end{equation}
Hence, by the Karamata-Tauberian theorem the Cesaro mean of $u(x,t)$
behaves as 
\[
M_{t}(u(x,t))\sim Ct^{-\theta/2}L(t)\sim Ct^{-\theta/2},\;t\to\infty.
\]
\item For $d=2$, as $\lambda\to0$ 
\[
(\mathscr{L}u(x,\cdot))(\lambda)\sim\lambda^{-(1-\theta)}L\left(\frac{1}{\lambda}\right),
\]
where $L(x):=C\Gamma(0,x^{-\theta})=CE_{1}(x^{-\theta})$ and $E_{1}(x)$,
$x>0$ is the exponential integral, see \cite[Eq.~(5.1.1)]{AS92}.
For $x\to0$ we have, cf. \cite[Eq.~(5.1.11)]{AS92} 
\begin{equation}
E_{1}(x)\sim-\kappa-\ln(x),\label{eq:asym-Ei-zero}
\end{equation}
where $\kappa$ is the Euler-Mascheroni constant. Then it is simple
to show that $L(x)=CE_{1}(x^{-\theta})$ is a SVF. Thus, by the Karamata-Tauberian
theorem we obtain 
\begin{equation}
M_{t}(u(x,t))\sim Ct^{-\theta}L(t)\sim Ct^{-\theta}\big(\kappa+\log(t^{-\theta})\big),\;t\to\infty.\label{eq:Cesaro-HE-LTA-d=00003D00003D2}
\end{equation}
\item For $d\ge3$, as $\lambda\to0$ 
\[
(\mathscr{L}u(x,\cdot))(\lambda)\sim\lambda^{-(1-\theta)}L\left(\frac{1}{\lambda}\right),
\]
where $L(x):=x^{\theta(1-d/2)}\Gamma(1-d/2,x^{-\theta}).$ To show
that $L(x)$ is a SVF use the relation 
\begin{equation}
\Gamma(s,x)\sim-\frac{x^{s}}{s},\quad\Re(s)<0,\;x\to0.\label{eq:asym-IGF-zero}
\end{equation}
Once more, the Karamata-Tauberian theorem gives 
\[
M_{t}(u(x,t))\sim Ct^{-\theta}L(t)\sim Ct^{-\theta},\;t\to\infty.
\]
\end{enumerate}
\item [{(C2).}] The Laplace transform $\mathcal{K}(\lambda)$ behaves as
$\lambda\to0$ 
\[
\mathcal{K}(\lambda)\sim\lambda^{-1}L\left(\frac{1}{\lambda}\right),\qquad L(x):=\mu(0)\log(x)^{-1}.
\]
We distinguish the cases $d=1$, $d=2$ and $d\ge3$. 
\begin{enumerate}
\item For $d=1$ as $\lambda\to0$ 
\[
(\mathscr{L}u(x,\cdot))(\lambda)\sim\lambda^{-1}L\left(\frac{1}{\lambda}\right),
\]
where $L(x):=C\log(x)^{-1/2}\Gamma(1/2,\mu(0)\log(x)^{-1})$. To verify
that $L(x)$ is a SVF notice that $L(x)$ is the product of two SVF,
then $L(x)$ is SVF, see \cite[Prop.~1.3.6]{Bingham1987}. Hence,
by the Karamata-Tauberian theorem and \eqref{eq:IGF-positive} the
Cesaro mean of $u(x,t)$ is 
\begin{align*}
M_{t}(u(x,t)) & \sim C\log(t)^{-1}\left(\log(t)^{1/2}\Gamma(1/2,\mu(0)\log(t)^{-1})\right)\\
 & \sim C\log(t)^{-1}+C'\log(t)^{-1/2},\;t\to\infty.
\end{align*}
\item For $d=2$ as $\lambda\to0$ 
\[
(\mathscr{L}u(x,\cdot))(\lambda)\sim\lambda^{-1}L\left(\frac{1}{\lambda}\right),
\]
where $L(x):=\mu(0)\log(x)^{-1}E_{1}(\mu(0)\log(x)^{-1})$. Again,
$L(x)$ is a SVF because it is the product of two SVF. Then an application
of the Karamata-Tauberian theorem and \eqref{eq:asym-Ei-zero} yields
\begin{align*}
M_{t}(u(x,t)) & \sim C\log(t)^{-1}E_{1}(\mu(0)\log(t)^{-1})\\
 & \sim C\log(t)^{-1}\big[\kappa+\log\big(\mu(0)\log(t)^{-1}\big)\big],\;t\to\infty.
\end{align*}
\item In general, for any $d\ge3$ as $\lambda\to0$ we have 
\[
(\mathscr{L}u(x,\cdot))(\lambda)\sim\lambda^{-1}L\left(\frac{1}{\lambda}\right),
\]
where $L(x):=\left(\mu(0)\log(x)^{-1}\right)^{d/2}\Gamma(1-d/2,\mu(0)\log(x)^{-1}).$
It is clear that $L(x)$ is a SVF, hence the Karamata-Tauberian theorem
together with \eqref{eq:asym-IGF-zero} implies the long time behavior
for $M_{t}(u(x,\cdot))$, namely 
\begin{align*}
M_{t}(u(x,t)) & \sim C\log(t)^{-1}\left(\log(t)^{1-d/2}\Gamma(1-d/2,\mu(0)\log(t)^{-1})\right)\\
 & \sim C\log(t)^{-1}.
\end{align*}
\end{enumerate}
\item [{(C3).}] Finally, let us investigate the Cesaro mean of $u(x,t)$
for the class (C3), that is where $\mathcal{K}(\lambda)$ behaves
as $\lambda\to0$ 
\[
\mathcal{K}(\lambda)\sim\lambda^{-1}L\left(\frac{1}{\lambda}\right),\quad L(x):=C(\log(x))^{-1-s},\;s>0,\;C>0.
\]
Proceeding as before we distinguish the following cases: 
\begin{enumerate}
\item For $d=1$, as $\lambda\to0$ we have 
\[
(\mathscr{L}u(x,\cdot))(\lambda)\sim\lambda^{-1}L\left(\frac{1}{\lambda}\right),
\]
where 
\begin{align*}
L(x) & :=C\log(t)^{-1-s}\Gamma(1/2,C\log(x)^{-1-s})\\
 & =C\log(t)^{-1-s}\big(\sqrt{\pi}-\gamma(1/2,\log(x)^{-1-s}\big)
\end{align*}
is a SVF since it is the product of two SVF. Then, the Karamata-Tauberian
theorem yields 
\[
M_{t}(u(x,t))\sim C\log(t)^{-1-s}\left(\sqrt{\pi}-2\log(t)^{(-1-s)/2}\right),\;t\to\infty.
\]
\item For $d=2$, as $\lambda\to0$ we have 
\[
(\mathscr{L}u(x,\cdot))(\lambda)\sim\lambda^{-1}L\left(\frac{1}{\lambda}\right),
\]
where $L(x):=C\log(t)^{-1-s}E_{1}(C\log(x)^{-1-s}).$ Then it follows
from Karamata-Tauberian theorem and \eqref{eq:asym-Ei-zero} that
\[
M_{t}(u(x,t))\sim C\log(t)^{-1-s}[\kappa+\log(C\log(t)^{-1-s})],\;t\to\infty.
\]
\item for $d\ge3$, as $\lambda\to0$ 
\[
(\mathscr{L}u(x,\cdot))(\lambda)\sim\lambda^{-1}L\left(\frac{1}{\lambda}\right),
\]
where $L(x):=C\log(x)^{-2-s+d/2}\Gamma(1-d/2,C\log(x)^{-1-s}).$ Again,
$L(x)$ is a SVF as it is a product of two SVF. Then by the Karamata-Tauberian
theorem and \eqref{eq:asym-IGF-zero} we obtain 
\begin{align*}
M_{t}(u(x,t)) & \sim C\log(t)^{-1-s}\big[\log(t)^{d/2-1}\Gamma(1-d/2,C\log(t)^{-1-s})\big]\\
 & \sim C\log(t)^{-1-s}.
\end{align*}
\end{enumerate}
\end{description}
\begin{rem} As a conclusion, the alternative method produces the
same long time decay of the Cesaro mean of $u(x,t)$ compared to the
general method from Section~\ref{sec:GFEE} for $d\ge3$. In addition,
with the Laplace transform method we can handle the dimensions $d=1$
and $d=2$ which was not possible with the general method. \end{rem}

\subsection{Linear Non-local Diffusion}

It follows from \eqref{eq:estimate-uzero-LNLD1} and \eqref{eq:estimate-uzero-LNLD2}
that the solution $u_{0}(x,\tau)$ is bounded in a neighbourhood of
$\tau=0+$, therefore the long time behavior of $M_{t}(u(x,t))$ depends
only on $\tau>1$, that is the factor 
\[
CK(\lambda)\int_{1}^{\infty}\tau^{-d/r}e^{-\tau\lambda\mathcal{K}(\lambda)}\,d\tau.
\]
The integral on the right hand side above is computed using \eqref{eq:IGF}
such that (neglecting a constant) 
\[
(\mathscr{L}u(x,\cdot))(\lambda)=C\mathcal{K}(\lambda)(\lambda\mathcal{K}(\lambda))^{d/r-1}\Gamma(1-d/r,\lambda\mathcal{K}(\lambda)).
\]
We investigate the long time behavior of $M_{t}(u(x,t))$ for the
three classes of admissible kernels (C1) , (C2) and (C3). The analysis
below is similar to the analysis of the heat equation assuming $1<r\le2$. 
\begin{description}
\item [{(C1).}] We have $\mathcal{K}(\lambda)=\lambda^{\theta-1}$, $0<\theta<1$
and 
\[
(\mathscr{L}u(x,\cdot))(\lambda)=\lambda^{-(1-\theta d/r)}L\left(\frac{1}{\lambda}\right),
\]
where $L(x)=C\Gamma(1-d/r,x^{-\theta})$ is a SVF. 
\begin{enumerate}
\item For $d=1$ it follows that 
\[
(\mathscr{L}u(x,\cdot))(\lambda)=\lambda^{-(1-\theta/r)}\Gamma(1-1/r,\lambda^{-\theta})
\]
with $1-\theta/r>0$ and $1-1/r\in(0,1/2]$. As $\Gamma(1-1/r,\lambda^{-\theta})$
is a SVF, then the Karamata-Tauberian theorem gives 
\[
M_{t}(u(x,t))\sim C\lambda^{-\theta/r}\Gamma(1-1/r,\lambda^{-\theta})
\]
and using the equality \eqref{eq:IGF-positive} we obtain 
\[
M_{t}(u(x,t))\sim Ct^{-\theta/r}L(t)\sim Ct^{-\theta/r}.
\]
\item For $d=2$ we have as $\lambda\to0$
\[
(\mathscr{L}u(x,\cdot))(\lambda)\sim\lambda^{-(1-2\theta/r)}\Gamma(1-2/r,\lambda^{-\theta})
\]
such that to have $1-2\theta/r>0$ implies that $r=2$. This case
is similar to the heat equation, see \eqref{eq:Cesaro-HE-LTA-d=00003D00003D2}.
Thus, we have 
\[
M_{t}(u(x,t))\sim Ct^{-\theta}\big(\kappa+\log(t^{-\theta})\big),\quad t\to\infty.
\]
\item For $d\in[3,r/\theta\vee3)$, we have 
\[
(\mathscr{L}u(x,\cdot))(\lambda)\sim\lambda^{-(1-\theta)}L\left(\frac{1}{\lambda}\right),\quad\lambda\to0,
\]
where $L(x)=x^{\theta(1-d/r)}\Gamma(1-d/r,x^{-\theta})$ is a SVF
using \eqref{eq:asym-IGF-zero}. Therefore, we derive the long time
behavior of $M_{t}(u(x,t))$ as a consequence of the Karamata-Tauberian
theorem, namely 
\[
M_{t}(u(x,t))\sim Ct^{-\theta}L(t)\sim Ct^{-\theta},\quad t\to\infty.
\]
\end{enumerate}
\item [{(C2).}] That is the case when $\mathcal{K}(\lambda)\sim\lambda^{-1}L\left(\frac{1}{\lambda}\right)$,
$L(x):=\mu(0)\log(x)^{-1}$ which implies, as $\lambda\to0$ 
\[
(\mathscr{L}u(x,\cdot))(\lambda)\sim C\lambda^{-1}L\left(\frac{1}{\lambda}\right)^{d/r}\Gamma\left(1-d/r,L\left(\frac{1}{\lambda}\right)\right).
\]

\begin{enumerate}
\item For $d=1$ as $\lambda\to0$, we have 
\[
(\mathscr{L}u(x,\cdot))(\lambda)\sim\lambda^{-1}L\left(\frac{1}{\lambda}\right),
\]
where $L(x)=C\log(x)^{-1/r}\Gamma(1-1/r,\mu(0)\log(x)^{-1})$ is a
SVF. Then the Karamata-Tauberian theorem and \eqref{eq:IGF-positive}
yields 
\begin{align*}
M_{t}(u(x,t)) & \sim C\log(t)^{-1}\log(t)^{-1-1/r}\Gamma(1-1/r,\mu(0)\log(x)^{-1})\\
 & \sim C\log(t)^{-1}\big(\Gamma(1-1/r)\log(t)^{-1-1/r}-C'\big),\quad t\to\infty.
\end{align*}
\item Now for $d=2$ we have, as $\lambda\to0$ 
\[
\mathscr{L}(u(x,\cdot))(\lambda)\sim\lambda^{-1}L\left(\frac{1}{\lambda}\right),
\]
where $L(x)=C\log(x)^{-2/r}\Gamma(1-2/r,\mu(0)\log(x)^{-1})$ is a
SVF. 
\begin{enumerate}
\item For the special case $r=2$ it reduces to 
\[
L(x)=C\log(x)^{-1}E_{1}(\mu(0)\log(x)^{-1}).
\]
Then an application of the Karamata-Tauberian theorem and \eqref{eq:asym-Ei-zero}
yields 
\begin{align*}
M_{t}(u(x,t)) & \sim C\log(t)^{-1}E_{1}(\mu(0)\log(t)^{-1})\\
 & \sim C\log(t)^{-1}\big[\kappa+\log(\mu(0)\log(t)^{-1})\big],\quad\text{t\ensuremath{\to\infty.}}
\end{align*}
\item For $1<r<2$, then $-1<1-2/r<0$ and by \eqref{eq:asym-IGF-zero}
\begin{align*}
M_{t}(u(x,t)) & \sim C\log(t)^{-1}\log(x)^{1-2/r}\Gamma(1-2/r,\mu(0)\log(x)^{-1})\\
 & \sim C\log(t)^{-1},\quad\text{t\ensuremath{\to\infty.}}
\end{align*}
\end{enumerate}
\item For $d\ge3$, we obtain as $\lambda\to0$ 
\[
(\mathscr{L}u(x,\cdot))(\lambda)\sim\lambda^{-1}L\left(\frac{1}{\lambda}\right),
\]
where $L(x)=C\log(x)^{-d/r}\Gamma(1-d/r,\mu(0)\log(x)^{-1})$ is a
SVF. As $1-d/r<0$, then by the Karamata-Tauberian theorem and \eqref{eq:asym-IGF-zero}
follows 
\begin{align*}
M_{t}(u(x,t)) & \sim C\log(t)^{-1}\log(x)^{1-d/r}\Gamma(1-d/r,\mu(0)\log(x)^{-1})\\
 & \sim C\log(t)^{-1},\quad\text{t\ensuremath{\to\infty.}}
\end{align*}
\end{enumerate}
\item [{(C3).}] The third class of admissible kernels has Laplace transform
\[
\mathcal{K}(\lambda)\sim\lambda^{-1}L\left(\frac{1}{\lambda}\right),\quad L(x):=C(\log(x))^{-1-s},\;s>0,\;C>0
\]
such that 
\[
(\mathscr{L}u(x,\cdot))(\lambda)\sim C\lambda^{-1}L\left(\frac{1}{\lambda}\right)^{d/r}\Gamma\left(1-d/r,L\left(\frac{1}{\lambda}\right)\right).
\]

\begin{enumerate}
\item First we take $d=1$ and obtain 
\[
(\mathscr{L}u(x,\cdot))(\lambda)\sim\lambda^{-1}L\left(\frac{1}{\lambda}\right),
\]
where $L(x)=C\log(x)^{-(1+s)/r}\Gamma(1-1/r,C\log(x)^{-1-s})$ is
a SVG. Then by the Karamata-Tauberian theorem and \eqref{eq:IGF-positive}
it follows as $t\to\infty$ that 
\begin{align*}
M_{t}(u(x,t)) & \sim C\log(t)^{-1-s}\log(t)^{1+s-(1+s)/r}\Gamma(1-1/r,C\log(t)^{-1-s})\\
 & \sim C\log(t)^{-1-s}\left(\log(t)^{1+s-(1+s)/r}\Gamma(1-1/r)+C'\right).
\end{align*}
\item For $d=2$ we have 
\[
(\mathscr{L}u(x,\cdot))(\lambda)\sim\lambda^{-1}L\left(\frac{1}{\lambda}\right),
\]
where $L(x)=C\log(x)^{-2(1+s)/r}\Gamma\left(1-2/r,C\log(x)^{-1-s}\right)$
is a SVF. 
\begin{enumerate}
\item For $r=2$ the SVF $L(x)$ reduces to 
\[
L(x)=C\log(x)^{-(1+s)}E_{1}\left(C\log(x)^{-1-s}\right)
\]
and then using \eqref{eq:asym-Ei-zero} we obtain 
\[
M_{t}(u(x,t))\sim C\log(t)^{-1-s}(\kappa+\log(C\log(t)^{-1-s})),\quad t\ensuremath{\to\infty.}
\]
\item For $1<r<2$ we have $-1<1-2/r<0$ and 
\begin{align*}
M_{t}(u(x,t)) & \sim\frac{C}{\log(t)^{1+s}}\log(t)^{(1+s)(1-2/r)}\Gamma\left(1-2/r,C\log(t)^{-1-s}\right)\\
 & \sim C\log(t)^{-1-s},\quad t\to\infty.
\end{align*}
\end{enumerate}
\item Finally for $d\ge3$ we have 
\[
(\mathscr{L}u(x,\cdot))(\lambda)\sim\lambda^{-1}L\left(\frac{1}{\lambda}\right),
\]
where $L(x)=C\log(x)^{-d(1+s)/r}\Gamma\left(1-d/r,C\log(x)^{-1-s}\right)$
is a SVF. As before, we obtain 
\begin{align*}
M_{t}(u(x,t)) & \sim\frac{C}{\log(t)^{1+s}}\log(t)^{(1+s)(1-d/r)}\Gamma\left(1-d/r,C\log(t)^{-1-s}\right)\\
 & \sim C\log(t)^{-1-s},\quad t\to\infty.
\end{align*}
\end{enumerate}
\end{description}
In conclusion, both methods produces the same type of long time behavior
for $d\ge3$, in addition for $d=1$ and $d=2$ we are also able to
obtain a decay using this alternative Laplace transform method.

\subsection*{Founding}

Jos{\'e} L. da Silva is a member of the Centro de Investiga{\c c\~a}o
em Matem{\'a}tica e Aplica{\c c\~o}es (CIMA), Universidade da Madeira,
a research centre supported with Portuguese funds by FCT (Funda{\c c\~a}o
para a Ci{\^e}ncia e a Tecnologia, Portugal) through the Project
UID/MAT/04674/2019. Financial support from Bielefeld Graduate School
in Theoretical Sciences, the IRTG 2235 and the CRC 1283 are grateful
acknowledged.


\begin{thebibliography}{10}

\bibitem{AS92}
M.~Abramowitz and I.~A. Stegun, editors.
\newblock {\em Handbook of mathematical functions with formulas, graphs, and
  mathematical tables}.
\newblock Dover Publications Inc., New York, 1992.
\newblock Reprint of the 1972 edition.

\bibitem{Rossi2010}
F.~Andreu-Vaillo, J.~M. Maz{\'o}n, and J.~D. Rossi.
\newblock {\em {Nonlocal Diffusion Problems}}, volume 165 of {\em Mathematical
  Surveys and Monographs}.
\newblock AMS, Providence, Rhode Island, 2010.

\bibitem{Atanackovic2009}
T.~M. Atanackovic, S.~Pilipovic, and D.~Zorica.
\newblock {Time distributed-order diffusion-wave equation. I., II.}
\newblock In {\em Proceedings of the Royal Society of London A: Mathematical,
  Physical and Engineering Sciences}, volume 465, pages 1869--1891, 1893--1917.
  The Royal Society, 2009.

\bibitem{BM01}
B.~Baeumer and M.~M. Meerschaert.
\newblock Stochastic solutions for fractional {C}auchy problems.
\newblock {\em Fract.\ Calc.\ Appl.\ Anal.}, 4(4):481--500, 2001.

\bibitem{Bazhlekova2015}
E.~Bazhlekova.
\newblock Subordination principle for a class of fractional order differential
  equations.
\newblock {\em Mathematics}, 3(2):412--427, 2015.

\bibitem{Bazhlekova00}
E.~G. Bazhlekova.
\newblock Subordination principle for fractional evolution equations.
\newblock {\em Fract.\ Calc.\ Appl.\ Anal.}, 3(3):213--230, 2000.

\bibitem{Baz01}
E.~G. Bazhlekova.
\newblock {\em {Fractional Evolution Equations in Banach Spaces}}.
\newblock PhD thesis, University of Eindhoven, 2001.

\bibitem{Bingham1987}
N.~H. Bingham, C.~M. Goldie, and J.~L. Teugels.
\newblock {\em {Regular variation}}, volume~27 of {\em Encyclopedia of
  Mathematics and its Applications}.
\newblock Cambridge University Press, Cambridge, 1987.

\bibitem{Chen2017}
Z.-Q. Chen.
\newblock Time fractional equations and probabilistic representation.
\newblock {\em Chaos Solitons Fractals}, 102:168--174, 2017.

\bibitem{Chen2018}
Z.-Q. Chen, P.~Kim, T.~Kumagai, and J.~Wang.
\newblock Heat kernel estimates for time fractional equations.
\newblock {\em Forum Math.}, 30(5):1163--1192, 2018.

\bibitem{Daftardar-Gejji2008}
V.~Daftardar-Gejji and S.~Bhalekar.
\newblock Boundary value problems for multi-term fractional differential
  equations.
\newblock {\em J. Math. Anal. Appl.}, 345(2):754--765, 2008.

\bibitem{Eidelman2004}
S.~D. Eidelman, S.~D. Ivasyshen, and A.~N. Kochubei.
\newblock {\em Analytic Methods in the Theory of Differential and
  Pseudo-Differential Equations of Parabolic Type}, volume 152.
\newblock Springer Science \& Business Media, 2004.

\bibitem{Gorenflo2005}
R.~Gorenflo and S.~Umarov.
\newblock {Cauchy and nonlocal multi-point problems for distributed order
  pseudo-differential equations, Part one}.
\newblock {\em Z.\ Anal.\ Anwend.}, 24(3):449--466, 2005.

\bibitem{Hanyga2007}
A.~Hanyga.
\newblock Anomalous diffusion without scale invariance.
\newblock {\em J.\ Phys.\ A: Mat.\ Theor.}, 40(21):5551, 2007.

\bibitem{KST2006}
A.~A. Kilbas, H.~M. Srivastava, and J.~J. Trujillo.
\newblock {\em {Theory and Applications of Fractional Differential Equations}},
  volume 204 of {\em North-Holland Mathematics Studies}.
\newblock Elsevier Science B.V., Amsterdam, 2006.

\bibitem{KKS2018}
A.~Kochubei, Y.~Kondratiev, and J.~L. da~Silva.
\newblock From random times to fractional kinetics, 2019.
\newblock ArXiv:1811.10531.

\bibitem{Kochubei2008a}
A.~N. Kochubei.
\newblock Distributed-order calculus: An operator-theoretic interpretation.
\newblock {\em Ukrainian Math.\ J.}, 60(4):551, 2008.

\bibitem{Kochubei2008}
A.~N. Kochubei.
\newblock Distributed order calculus and equations of ultraslow diffusion.
\newblock {\em J.\ Math.\ Anal.\ Appl.}, 340(1):252--281, 2008.

\bibitem{Kochubei11}
A.~N. Kochubei.
\newblock General fractional calculus, evolution equations, and renewal
  processes.
\newblock {\em Integral Equations Operator Theory}, 71(4):583--600, October
  2011.

\bibitem{Kolokoltsov2011}
V.~N. Kolokoltsov.
\newblock {\em Markov Processes, Semigroups and generators}, volume~38.
\newblock Walter de Gruyter, 2011.

\bibitem{Li2007}
Y.-C. Li, R.~Sato, and S.-Y. Shaw.
\newblock {Ratio Tauberian theorems for positive functions and sequences in
  Banach lattices}.
\newblock {\em Positivity}, 11(3):433--447, 2007.

\bibitem{FL}
J.~L. L{\'o}pez and C.~Ferreira.
\newblock Asymptotic expansions of generalized {S}tieltjes transforms of
  algebraically decaying functions.
\newblock {\em Stud.\ Appl.\ Math.}, 108(2):187--215, 2002.

\bibitem{MS2004}
M.~M. Meerschaert and H.-P. Scheffler.
\newblock {Limit theorems for continuous-time random walks with infinite mean
  waiting times}.
\newblock {\em J.\ Appl.\ Probab.}, 41(3):623--638, September 2004.

\bibitem{Meerschaert2006}
M.~M. Meerschaert and H.-P. Scheffler.
\newblock Stochastic model for ultraslow diffusion.
\newblock {\em Stochastic Process.\ Appl.}, 116(9):1215--1235, 2006.

\bibitem{Pruss12}
J.~Pr{\"u}ss.
\newblock {\em Evolutionary Integral Equations and Applications}, volume~87 of
  {\em Monographs in Mathematics}.
\newblock Birkh{\"a}user Verlag, Basel, 1993.

\bibitem{Schilling12}
R.~L. Schilling, R.~Song, and Z.~Vondra{\v{c}}ek.
\newblock {\em Bernstein Functions: Theory and Applications}.
\newblock De Gruyter Studies in Mathematics. De Gruyter, Berlin, 2 edition,
  2012.

\bibitem{Toaldo2015}
B.~Toaldo.
\newblock Convolution-type derivatives, hitting-times of subordinators and
  time-changed $c_0$-semigroups.
\newblock {\em Potential Anal.}, 42(1):115--140, 2015.

\bibitem{Wong2001}
R.~Wong.
\newblock {\em Asymptotic Approximations of Integrals}, volume~34 of {\em
  SIAM's Classics in Applied Mathematics}.
\newblock SIAM, 2001.

\end{thebibliography}
\end{document}